\documentclass[11pt]{amsart}

\usepackage{hyperref}
\hypersetup{bookmarksdepth=3}
  
\usepackage{geometry}
\usepackage{amsmath,amssymb}
\usepackage{
mathrsfs}
\usepackage{enumerate}
\usepackage{esint}
\usepackage{mathtools}

\usepackage{tensor}

\usepackage{IEEEtrantools}

\usepackage{amssymb,amsmath,amsthm}


\newtheorem{theorem}{Theorem}

\theoremstyle{remark}\newtheorem{remark}[theorem]{Remark}

\usepackage{graphicx}

\newcommand{\abs}[1]{\left| #1 \right|}

\newcommand{\C}{{\mathbb C}}
\newcommand{\D}{\mathrm{d}}

\newcommand{\FT}{\mathcal{F}}

\newcommand{\iu}{\mathrm{i}}
\newcommand{\iv}[1]{\frac{1}{ #1}}
\newcommand{\jb}[1]{\langle #1 \rangle}

\newcommand{\norm}[1]{\left\Vert #1 \right\Vert}
\newcommand{\opT}{\mathcal{T}}

\newcommand{\R}{{\mathbb R}}

\newcommand{\set}[1]{\left\{ #1 \right\}}

\DeclareMathOperator{\supp}{supp}

\newcommand{\Z}{{\mathbb Z}}

\begin{document}

{\let\thefootnote\relax\footnote{Date: April 1st, 2019.

\textcopyright 2019 by the authors. Faithful reproduction of this article, in its entirety, by any means is permitted for noncommercial purposes.}}

\title{The global Cauchy problem for the NLS with higher order anisotropic dispersion.}

\subjclass{35A01, 35A02, 35J10, 35J99.} 
\keywords{Decay estimates, Global existence, Modulation spaces, Nonlinear dispersive equation.}

\author{L. Chaichenets}
\address{leonid chaichenets, department of mathematics, institute for analysis, karlsruhe institute of technology, 76128 karlsruhe, germany }
\email{leonid.chaichenets@kit.edu}

\author{N. Pattakos}
\address{nikolaos pattakos, department of mathematics, institute for analysis, karlsruhe institute of technology, 76128 karlsruhe, germany }
\email{nikolaos.pattakos@kit.edu}

\begin{abstract}
{We use a method developed by Strauss to obtain global wellposedness
results in the mild sense for the small data Cauchy problem in modulation spaces
$M_{p,q}^s(\R^d)$, where $q=1$ and $s\geq0$ or $q\in(1,\infty]$ and
$s>\frac{d}{q'}$ for a nonlinear Schr\"odinger equation with higher
order anisotropic dispersion and algebraic nonlinearities.}
\end{abstract}

\maketitle
\pagestyle {myheadings}

\begin{section}{introduction and main results}
\markboth{\normalsize L. Chaichenets and N. Pattakos }{\normalsize  Nonlinear Schr\"odinger equation with higher order anisotropic dispersion}

We are interested in the following Cauchy problem 
\begin{equation} 
\label{maineq}
\left\{
\begin{IEEEeqnarraybox}[][c]{rCl}
\iu \partial_t u(t,x) + \alpha \Delta u(t,x) +
\iu \beta \frac{\partial^3}{\partial x_1^3} u(t,x) +
\gamma \frac{\partial^4}{\partial x_1^4} u(t,x) +
f(u(t,x)) & = & 0, \\
u(0, \cdot) & = & u_0(\cdot),
\end{IEEEeqnarraybox}
\right.
\end{equation}
where $(t,x)=(t,x_{1},x')\in\R\times\R\times\R^{d-1}$, $d\geq2$, $\alpha\in\R\setminus\{0\}$, and $(\beta,\gamma)\in\R^{2}\setminus\{(0,0)\}$. Such PDE arise in the context of high-speed soliton transmission in long-haul optical communication system, see \cite{DP}. The case where the coefficiets $\alpha, \beta, \gamma$ are time dependent has been studied in \cite{CPS} in one dimension for the cubic nonlinearity, $f(u)=|u|^{2}u$, with initial data in $L^{2}(\R)$-based Sobolev spaces. In \cite{OB} it is proved that \eqref{maineq} with nonlinearity $f(u)=|u|^{p}u$ where 
$$p< \begin{cases}
\frac4{d-\frac12} &,\ \gamma\neq0,\\
\frac4{d-\frac13} &,\ \gamma=0,\\
\end{cases}$$
is globally wellposed in $L^{2}(\R^d)$ via Strichartz estimates and the charge conservation equation
$$\|u(t,\cdot)\|_{L^{2}(\R^d)}=\|u_{0}\|_{L^{2}(\R^d)}, \qquad \forall t\in\R.$$
In the same paper the case of initial data $u_{0}\in H^{1}_{a}(\R^d)$ is studied where 
$$H^{1}_{a}(\R^d):=\set{u\in L^{2}(\R^d) \Big| \nabla u, \partial_{x_{1}}^{2}u\in L^{2}(\R^d)}$$ is
equipped with the norm
$$\|u\|_{H^{1}_{a}(\R^d)}:=\Big(\|u\|_{L^{2}(\R^d)}^{2}+\|\nabla u\|_{L^{2}(\R^d)}^{2}+\|\partial_{x_{1}}^{2}u\|_{L^{2}(\R^d)}^{2}\Big)^{\frac12}.$$

In this paper we consider the Cauchy problem \eqref{maineq} with initial data $u_{0}$ in modulation spaces $M_{p,q}^{s}(\R^{d})$. Modulation spaces were introduced by Feichtinger in \cite{FEI} and since then, they have become canonical for both time-frequency and phase-space analysis. They provide an excellent substitute for estimates that are known to fail on Lebesgue spaces. To state the definition of a modulation space we need to fix some notation. We will denote by $S'(\R^{d})$ the space of tempered distributions. Let $Q_{0}$ be the unit cube with center the origin in $\R^{d}$ and its translations $Q_k \coloneqq Q_0 + k$ for all $k\in\mathbb Z^{d}.$ Consider a partition of unity $\{\sigma_{k}=\sigma_{0}(\cdot-k)\}_{k\in\mathbb Z^{d}}\subset C^{\infty}(\mathbb R^{d})$ satisfying 
\begin{itemize}
\item
$ \exists c > 0: \,
\forall \eta \in Q_{0}: \,
|\sigma_{0}(\eta)| \geq c$,
\item
$
\mbox{supp}(\sigma_{0}) \subseteq \{\xi\in\R^{d}:|\xi|<\sqrt{d}\}=:B(0,\sqrt{d})$,
\end{itemize}
and define the isometric decomposition operators
\begin{equation}
\label{iso}
\Box_k \coloneqq \FT^{(-1)} \sigma_k \FT, \qquad
\forall k \in \Z^d,
\end{equation}
where $\mathcal F$ denotes the Fourier transform in $\R^d$. Then the norm of a tempered distribution $f\in S'(\R^{d})$ in the modulation space $M^{s}_{p,q}(\mathbb R^{d})$, where $s\in\mathbb R, 1\leq p,q\leq\infty$, is given by
\begin{equation}
\label{def}
\|f\|_{M^{s}_{p,q}}:=\Big\|\Big\{\langle k\rangle^{s}\|\Box_{k}f\|_{p}\Big\}_{k\in\Z^{d}}\Big\|_{l^{q}(\Z^{d})},
\end{equation}
where we denote by $\langle k\rangle=1+|k|$ the Japanese bracket. It can be proved that different choices of the function $\sigma_{0}$ lead to equivalent norms in $M^{s}_{p,q}(\mathbb R^{d})$ (see e.g. \cite[Proposition 2.9]{CHA}). When $s=0$ we denote the space $M^{0}_{p,q}(\mathbb R^{d})$ by $M_{p,q}(\mathbb R^{d})$. In the special case where $p=q=2$ we have $M_{2,2}^{s}(\R^{d})=H^{s}(\R^{d})$ the usual Sobolev spaces.

For $\alpha \in \R$ we define the weighted mixed-norm space
\begin{equation*}
L^\infty_{\alpha}(\R, M_{p, q}^s(\R^d)) \coloneqq
\set{u \in L^\infty(\R, M_{p, q}^s(\R^d)) \Big| \, \norm{u}_{L^\infty_{\alpha}(\R, M_{p, q}^s(\R^d))} < \infty},
\end{equation*}
where
\begin{equation*}
\norm{u}_{L^\infty_{\alpha}(\R, M_{p, q}^s(\R^d))} \coloneqq \sup_{t \in \R} \, \jb{t}^{\alpha} \norm{u(t, \cdot)}_{M_{p, q}^{s}}.
\end{equation*}

Let us denote by $\pi(u^{m+1})$ any $(m+1)$-time product of $u$ and $\bar{u}$, where $m\in\Z_{+}$. Define also the quantity
\begin{equation}
\label{timeme}
\frac2{\gamma_{m,d}}=
\begin{cases} (d-\frac12)(\frac{m}{2(m+2)}) &,\ \gamma \neq 0,\\
(d-\frac13)(\frac{m}{2(m+2)}) &,\ \gamma=0.\\
\end{cases}
\end{equation}
Futhermore, let $m_0$ denote the positive root of
\begin{equation}
\label{mfirst}
\begin{cases}
(2d-1)x^{2}+(2d-5)x-8 = 0 &, \, \gamma \neq 0,\\
(3d-1)x^{2}+(3d-7)x-12=0 &, \, \gamma = 0.\\
\end{cases}
\end{equation}
The main results are the following theorems.

\begin{theorem}
\label{th1}
Suppose that $d\geq1$, $f(u)=\pm\pi(u^{m+1})$, $m\in\Z_{+}$ with $m>m_{0}$ and $q \in [1, \infty]$.
For $q = 1$, let $s \geq 0$ and for $q > 1$, let $s > \frac{d}{q'}$. Then there exists a $\delta > 0$ such that for any $u_0 \in M_{\frac{m + 2}{m + 1},q}^s(\R^d)$ with $\|u_{0}\|_{M_{\frac{m + 2}{m + 1},q}^{s}}\leq\delta$ the Cauchy problem \eqref{maineq} admits a unique global solution 
\begin{equation}
\label{kka}
u \in L^\infty_{\frac{2}{\gamma_{m, d}}}(\R, M_{2 + m, q}^s (\R^d)).
\end{equation}
\end{theorem}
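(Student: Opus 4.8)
The plan is to run a contraction mapping argument for the Duhamel (mild) formulation of \eqref{maineq} in the weighted space $X \coloneqq L^\infty_{2/\gamma_{m,d}}(\R, M_{m+2,q}^s(\R^d))$. Denoting by $U(t) \coloneqq \FT^{-1} e^{\iu t \psi(\cdot)} \FT$ the propagator of the linear part, with real symbol $\psi(\xi) = -\alpha|\xi|^2 + \beta\xi_1^3 + \gamma\xi_1^4$, the problem is recast as the fixed point equation $u = \Phi(u)$ with
\[
\Phi(u)(t) \coloneqq U(t) u_0 \pm \iu \int_0^t U(t - \tau)\, \pi\!\bigl(u(\tau)^{m+1}\bigr) \D\tau .
\]
Two ingredients are needed: (i) a dispersive time-decay estimate for $U(t)$ between modulation spaces, and (ii) a multilinear Hölder-type estimate for the nonlinearity; the exponent bookkeeping then forces the condition $m > m_0$.

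For (i), I would localise in frequency: if $\widehat g$ is supported in $B(k,\sqrt d)$, then $U(t) g = K_t^{(k)} * g$ with $K_t^{(k)}(x) = \int_{\R^d} e^{\iu(x\cdot\xi + t\psi(\xi))}\chi(\xi-k)\,\D\xi$ for a fixed cutoff $\chi$ equal to $1$ on $B(0,\sqrt d)$. Since $\psi(\xi) = \psi_1(\xi_1) + \psi'(\xi')$ splits, with $\psi'(\xi') = -\alpha|\xi'|^2$ having nondegenerate Hessian and $\psi_1^{(4)} \equiv 24\gamma$ (respectively $\psi_1^{(3)} \equiv 6\beta$ when $\gamma = 0$) bounded away from zero, the kernel factorises (for a tensorised $\chi$), and van der Corput's lemma — of order $2$ in the $d-1$ transverse variables and of order $4$ (resp.\ $3$) in $\xi_1$ — gives $\norm{K_t^{(k)}}_{L^\infty} \lesssim \langle t\rangle^{-\rho}$ \emph{uniformly in $k$}, where $\rho = \tfrac{d-1}{2} + \tfrac14 = \tfrac12(d-\tfrac12)$ if $\gamma\neq0$ and $\rho = \tfrac{d-1}{2} + \tfrac13 = \tfrac12(d-\tfrac13)$ if $\gamma=0$. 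Interpolating this $L^1 \to L^\infty$ bound against the $L^2$ isometry of $U(t)$, and summing the $\ell^q$-weighted blocks via $\Box_k U(t) = U(t)\Box_k$, yields
\[
\norm{U(t) f}_{M_{m+2,q}^s} \lesssim \langle t\rangle^{-2/\gamma_{m,d}}\, \norm{f}_{M_{(m+2)/(m+1),q}^s}, \qquad t \in \R,
\]
because $\rho\bigl(1-\tfrac{2}{m+2}\bigr) = \rho\,\tfrac{m}{m+2} = \tfrac{2}{\gamma_{m,d}}$ and $\bigl(\tfrac{m+2}{m+1}\bigr)' = m+2$; in particular $U(\cdot)u_0 \in X$ with norm $\lesssim \delta$.

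For (ii), under the hypothesis ($q=1$, $s\ge0$) or ($q\in(1,\infty]$, $s>\tfrac{d}{q'}$) the modulation spaces enjoy the product estimate $\norm{g_1\cdots g_N}_{M_{p,q}^s} \lesssim \prod_{j=1}^N \norm{g_j}_{M_{p_j,q}^s}$ whenever $\tfrac1p = \sum_j\tfrac1{p_j}$ (iterate the bilinear case, e.g.\ \cite{CHA}) — this is exactly where the restriction on $(s,q)$ enters. Since modulation norms are invariant under complex conjugation, choosing all $p_j = m+2$ gives $\norm{\pi(u^{m+1})}_{M_{(m+2)/(m+1),q}^s} \lesssim \norm{u}_{M_{m+2,q}^s}^{m+1}$, and a telescoping identity gives the companion bound $\norm{\pi(u^{m+1})-\pi(v^{m+1})}_{M_{(m+2)/(m+1),q}^s} \lesssim (\norm{u}_{M_{m+2,q}^s}^m + \norm{v}_{M_{m+2,q}^s}^m)\norm{u-v}_{M_{m+2,q}^s}$. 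Feeding (i)--(ii) into $\Phi$, for $u,v$ in the ball $B_R\subset X$ of radius $R$ one is led to
\[
\langle t\rangle^{2/\gamma_{m,d}}\norm{\Phi(u)(t)}_{M_{m+2,q}^s} \lesssim \delta + R^{m+1}\,\langle t\rangle^{2/\gamma_{m,d}}\!\int_0^{|t|}\!\langle |t|-\tau\rangle^{-2/\gamma_{m,d}}\langle\tau\rangle^{-(m+1)\,2/\gamma_{m,d}}\,\D\tau ,
\]
and the elementary bound $\int_0^t \langle t-\tau\rangle^{-a}\langle\tau\rangle^{-b}\,\D\tau \lesssim \langle t\rangle^{-a}$, valid for $0\le a\le b$ with $b>1$, closes the integral as soon as $(m+1)\tfrac{2}{\gamma_{m,d}}>1$. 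A direct computation shows this last inequality is equivalent to $(2d-1)m^2+(2d-5)m-8>0$ (respectively $(3d-1)m^2+(3d-7)m-12>0$ when $\gamma=0$), i.e.\ to $m>m_0$ with $m_0$ as in \eqref{mfirst}. Hence, for $R$ small enough that the implied constant $C$ satisfies $CR^m\le\tfrac12$, and then $\delta\coloneqq R/(2C)$, the map $\Phi$ sends $B_R$ into itself and is a contraction there, so Banach's fixed point theorem yields the unique fixed point $u\in B_R$, which is the sought mild solution \eqref{kka}.

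The main obstacle is step (i): one has to carry out the stationary-phase/van der Corput analysis of $K_t^{(k)}$ with constants \emph{uniform in the cube index $k$} — which is possible precisely because $\psi$ has no mixed monomials and its transverse Hessian and top-order $\xi_1$-derivative are constant — and to track the exponents so that the interpolated decay rate at $L^{m+2}$ is exactly $2/\gamma_{m,d}$. Everything else is either quotable (the modulation-space product estimate, which is the source of the conditions on $s$ and $q$) or elementary (the weighted convolution lemma), and the algebraic equivalence $m>m_0 \iff (m+1)\tfrac{2}{\gamma_{m,d}}>1$ is what ties the hypotheses of the theorem to the convergence of the Duhamel iteration.
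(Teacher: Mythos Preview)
Your argument is correct and coincides with the paper's: a Banach fixed-point argument for the Duhamel map on the ball $B_R\subset L^\infty_{2/\gamma_{m,d}}(\R,M^s_{m+2,q})$, closing via the dispersive estimate $\norm{W(t)f}_{M^s_{m+2,q}}\lesssim\jb{t}^{-2/\gamma_{m,d}}\norm{f}_{M^s_{(m+2)/(m+1),q}}$, the modulation-space H\"older inequality (Theorem~\ref{th3}), and the time-convolution bound under $(m+1)\,2/\gamma_{m,d}>1\iff m>m_0$. The only difference is in how step~(i) is obtained: the paper does not redo the oscillatory-integral analysis but quotes the Lebesgue-space dispersive bound from \cite{OB} and transfers it to modulation spaces by commuting $W(t)$ with $\Box_k$ and combining with Hausdorff--Young for small times (Theorem~\ref{dispmod}); your van~der~Corput route is a valid self-contained alternative that yields the same estimate (your identification of the uniformity in $k$ as the key point is correct, and it holds for exactly the structural reason you state), but is more laborious than simply citing the preliminaries.
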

The restriction on the power of the nonlinearity described in Theorem \ref{th1} is explained in remark \ref{rem:restriction}.

\begin{theorem}
\label{th2}
Suppose that $d\geq2$, $f(u)=\lambda(e^{\rho|u|^2} - 1)u$, $\lambda\in\C$ and $\rho>0$. In addition, let $s\geq0$ if $q=1$ and let $s>\frac{d}{q'}$ if $q\in(1,\infty]$. There exists $\delta>0$ such that for any $u_{0}\in M_{\frac43, q}^{s}(\R^d)$ with $\|u_{0}\|_{M_{\frac43,q}^{s}}\leq\delta$ the Cauchy problem \eqref{maineq} admits a unique global solution $u$ in the space
$L^\infty_{\frac{2}{\gamma_{2,d}}}(\R, M^s_{4, q}(\R^d))$.
\end{theorem}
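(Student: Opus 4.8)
The plan is to run the same fixed-point scheme that establishes Theorem~\ref{th1}, after expanding the nonlinearity into a power series. Since $|u|^{2}=u\bar u$, one has
\begin{equation*}
f(u)=\lambda\big(e^{\rho u\bar u}-1\big)u=\lambda\sum_{k=1}^{\infty}\frac{\rho^{k}}{k!}\,u^{k+1}\bar u^{k},
\end{equation*}
so $f$ is an infinite sum of terms of the form $\pi(u^{m+1})$ with $m=2k$. The organising idea is that \emph{every} one of these terms is estimated with the nonlinearity in $M^{s}_{4/3,q}(\R^{d})$ and the solution in $M^{s}_{4,q}(\R^{d})$ --- the indices of the cubic problem $m=2$ --- which is why $\tfrac43$ and $4$ appear in the statement. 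Accordingly I would set $X\coloneqq L^{\infty}_{\frac{2}{\gamma_{2,d}}}(\R,M^{s}_{4,q}(\R^{d}))$, work on the closed ball $B_{R}\subset X$ of a radius $R$ of the order of $\delta$, and study the Duhamel map
\begin{equation*}
\Phi(u)(t)\coloneqq U(t)u_{0}-\iu\int_{0}^{t}U(t-\tau)f(u(\tau))\,\D\tau,
\end{equation*}
where $U$ denotes the propagator of the linear part of \eqref{maineq}.

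For the inhomogeneous term the key ingredient is the nonlinear estimate
\begin{equation*}
\norm{f(u)}_{M^{s}_{4/3,q}}\le|\lambda|\sum_{k=1}^{\infty}\frac{\rho^{k}}{k!}\,\big\|u^{k+1}\bar u^{k}\big\|_{M^{s}_{4/3,q}}\lesssim|\lambda|\sum_{k=1}^{\infty}\frac{\rho^{k}C^{2k}}{k!}\,\norm{u}_{M^{s}_{4,q}}^{2k+1},
\end{equation*}
which I would obtain by writing $u^{k+1}\bar u^{k}=(u^{2}\bar u)(u\bar u)^{k-1}$, bounding the cubic factor $u^{2}\bar u$ in $M^{s}_{4/3,q}$ by the trilinear estimate already used for Theorem~\ref{th1} in the case $m=2$, and bounding the remaining $2(k-1)$ factors in the pointwise algebra $M_{\infty,1}(\R^{d})$, via the module inequality $\norm{gh}_{M^{s}_{4/3,q}}\lesssim\norm{g}_{M^{s}_{4/3,q}}\norm{h}_{M_{\infty,1}}$ and the embedding $M^{s}_{4,q}(\R^{d})\hookrightarrow M_{\infty,1}(\R^{d})$, both of which hold in the stated ranges ($q=1$, $s\ge0$ or $q>1$, $s>\tfrac{d}{q'}$). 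The one thing to check here is that the constant grows at most geometrically in $k$, so that the factorial beats it and the series converges, with sum comparable to $|\lambda|\,C\norm{u}_{M^{s}_{4,q}}\big(e^{C^{2}\rho\norm{u}_{M^{s}_{4,q}}^{2}}-1\big)$.

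Next I would bring in the time decay. For the linear term one uses the dispersive estimate $\norm{U(t)u_{0}}_{M^{s}_{4,q}}\lesssim\jb{t}^{-\frac{2}{\gamma_{2,d}}}\norm{u_{0}}_{M^{s}_{4/3,q}}$ established in the proof of Theorem~\ref{th1} (case $m=2$), giving $\norm{U(\cdot)u_{0}}_{X}\lesssim\norm{u_{0}}_{M^{s}_{4/3,q}}$. For the Duhamel integral, since $\norm{u(\tau)}_{M^{s}_{4,q}}\le\jb{\tau}^{-\frac{2}{\gamma_{2,d}}}R$ on $B_{R}$, the $k$-th summand of $f(u(\tau))$ decays like $\jb{\tau}^{-(2k+1)\frac{2}{\gamma_{2,d}}}$, and applying the elementary bound $\int_{0}^{t}\jb{t-\tau}^{-a}\jb{\tau}^{-b}\,\D\tau\lesssim\jb{t}^{-a}$ term by term (with a constant uniform in $k$) requires $3\cdot\frac{2}{\gamma_{2,d}}>1$. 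A short computation identifies this with the statement that $m=2$ satisfies \eqref{mfirst} with strict inequality, i.e.\ $12d-22>0$ when $\gamma\neq0$ and $18d-30>0$ when $\gamma=0$, which is exactly the hypothesis $d\ge2$; this is where the dimension restriction originates. Combining the three ingredients gives $\norm{\Phi(u)}_{X}\le C_{1}\norm{u_{0}}_{M^{s}_{4/3,q}}+C_{2}\norm{u}_{X}\big(e^{C_{3}\norm{u}_{X}^{2}}-1\big)$, and treating $u^{k+1}\bar u^{k}-v^{k+1}\bar v^{k}$ by the telescoping identity (a sum of at most $2k+1$ products, each with one factor $u-v$ and the rest estimated as above) yields a Lipschitz bound for $\Phi$ on $B_{R}$ with constant $O(R^{2})$. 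Hence for $\delta$ small and $R\sim\delta$ the map $\Phi$ is a contraction on $B_{R}$; its fixed point is the desired global mild solution, and uniqueness in $B_{R}$ together with $u\in L^{\infty}_{\frac{2}{\gamma_{2,d}}}(\R,M^{s}_{4,q}(\R^{d}))$ are immediate.

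The one genuine obstacle --- everything else being a routine adaptation of the proof of Theorem~\ref{th1} --- is keeping all constants uniformly under control in $k$: in the multilinear and module estimates in modulation spaces, in the embedding $M^{s}_{4,q}\hookrightarrow M_{\infty,1}$, and in the convolution-in-time lemma. The whole argument hinges on these constants being dominated by the denominators $k!$, so that the series defining $f(u)$ --- and the corresponding difference series --- converge.
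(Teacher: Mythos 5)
Your proposal follows the paper's proof essentially step for step: the same power-series expansion $f(u)=\lambda\sum_{k\ge1}\frac{\rho^k}{k!}|u|^{2k}u$, the same contraction argument in $L^\infty_{\frac{2}{\gamma_{2,d}}}(\R,M^s_{4,q}(\R^d))$ with the indices of the cubic case $m=2$, the same term-by-term time integral (with the $k=1$ term dictating $\frac{6}{\gamma_{2,d}}>1$, i.e.\ $d\ge2$), and the summation of $\sum_k\frac{\rho^k}{k!}R^{2k+1}$ into an exponential.

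The one place you deviate is the product estimate, and as stated it does not close. The module inequality $\norm{gh}_{M^s_{4/3,q}}\lesssim\norm{g}_{M^s_{4/3,q}}\norm{h}_{M_{\infty,1}}$ fails for $s>0$: since $\Box_k(gh)$ picks up contributions from $\Box_l g\,\Box_m h$ with $l+m\approx k$, the weight $\jb{k}^s$ cannot be absorbed entirely by the $g$-factor (take $g$ frequency-localized near $0$ and $h$ near a large $k$), so the correct right-hand side carries $\norm{h}_{M^s_{\infty,1}}$. But then your chain breaks for $q>1$, because $M^s_{4,q}\hookrightarrow M^s_{\infty,1}$ would require $s>s+\frac{d}{q'}$ by \eqref{yeye233}; the unweighted embedding $M^s_{4,q}\hookrightarrow M_{\infty,1}$ that you invoke is true under your hypotheses but loses exactly the regularity the module inequality needs. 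The paper avoids this entirely by applying Theorem \ref{th3} iteratively with the exponent splitting $\frac{1}{4/3}=\frac14+\frac14+\frac14+0+\cdots+0$, which gives $\norm{|u|^{2k}u}_{M^s_{4/3,q}}\lesssim\norm{u}^3_{M^s_{4,q}}\norm{u}^{2k-2}_{M^s_{\infty,q}}$, followed by the trivial embedding $M^s_{4,q}\hookrightarrow M^s_{\infty,q}$ (same $q$, same $s$, $4\le\infty$). This yields precisely your target bound $\lesssim C^{2k}\norm{u}^{2k+1}_{M^s_{4,q}}$ with geometrically growing constant, which is the uniformity in $k$ you rightly identify as the crux; with that substitution the rest of your argument goes through unchanged.
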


\begin{remark}
For $q < \infty$, the solution from Theorem \ref{th1} and \ref{th2}
is a continuous function with values in the corresponding modulation
space, i.e. indeed a mild solution. For the more delicate situation
$q = \infty$ see \cite{KPC}.
\end{remark}

The idea of studying the Cauchy problem \eqref{maineq} with such time-decay norm is inspired by \cite{BH}, where the authors considered the NLS and the NLKG equations. As mentioned there, this idea goes back to the work of Strauss, see \cite{STR}. Their results were improved in \cite{TKK1} and \cite{TKK2} where the author considered the nonlinear higher order Schr\"odinger equation
\begin{equation}
\iu \partial_{t}u + \phi(\sqrt{-\Delta})u=f(u),
\end{equation}
where $\phi(\sqrt{-\Delta})=\mathcal F^{-1}\phi(|\xi|)\mathcal F$ and $\phi$ is a polynomial, with initial data $u_{0}$ in a modulation space.

\begin{remark}
Notice that Theorem \ref{th1} does not include the cubic nonlinearity in dimension $d=1$ since $m$ has to be strictly bigger than $m_{0}$ which is the positive root of the quadratics in \eqref{mfirst}, that is $m_{0}=\frac{3+\sqrt{41}}{2}$, if $\gamma\neq0$ and $m_{0}=\frac{4+\sqrt{110}}{4}$, if $\gamma=0$. In both cases $m_{0}>3$. 
\end{remark}

\begin{remark}
In \cite[Theorem 1.1 and Theorem 1.2]{BH}, the authors only considered modulation spaces $M_{p, q}^s(\R^d)$ with $q = 1$. But, by Theorem \ref{th3}, their crucial estimate (6.6) also holds for $q \in (1,\infty]$ and $s > \frac{d}{q'}$. Hence, the statements of their theorems is true in this case too.
\end{remark}

\subsection{Preliminaries}
It is known that for $s > d/q'$ (where $q'$ is the conjugate exponent of $q$) and $p, q \in [1, \infty]$, the embedding
\begin{equation}
\label{yeye}
M_{p,q}^{s}(\R^d)\hookrightarrow C_{\textrm{b}}(\R^d)=\set{f:\R^d\to\C\ \Big| f\ \text{continuous and bounded}},
\end{equation}
is continuous. The same is true for the embedding
\begin{equation}
\label{yeye233}
M_{p_{1}, q_{1}}^{s_{1}}(\R^d)\hookrightarrow M_{p_{2}, q_{2}}^{s_{2}}(\R^d),
\end{equation}
which holds for any $s_1, s_2 \in \R$ and any
$p_1, p_2, q_1, q_2 \in [1, \infty]$ satisfying $p_1 \leq p_2$ and
either
\begin{IEEEeqnarray*}{rCl't'rCl}
q_1 & \leq & q_2 & and & s_1 & \geq & s_2 \\
& & & or & & & \\
q_2 & < & q_1 & and & s_1 & > & s_2 + \frac{d}{q_2} - \frac{d}{q_1}
\end{IEEEeqnarray*}
(see \cite[Proposition 6.8 and Proposition 6.5]{FEI}).

We are going to use the following H\"older type inequality for modulation spaces which appeared in \cite[Theorem 4.3]{CHA} (see also \cite{CHKPmod}).
\begin{theorem}
\label{th3}
Let $d\geq1$ and $1\leq p, p_{1}, p_{2}, q\leq\infty$ such that $\frac1{p}=\frac1{p_{1}}+\frac1{p_{2}}$. For $q=1$ let $s\geq0$ and for $q\in(1,\infty]$ let $s>\frac{d}{q'}$. Then there exists a constant $C=C(d,s,q)>0$ such that 
$$\|fg\|_{M_{p,q}^{s}}\leq C\|f\|_{M_{p_{1},q}^{s}}\|g\|_{M_{p_{2},q}^{s}},$$
for all $f\in M_{p_{1},q}^{s}(\R^d)$ and $g\in M_{p_{2},q}^{s}(\R^d)$. 
\end{theorem}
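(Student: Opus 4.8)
The plan is to exploit the almost-orthogonality of the frequency-uniform decomposition and to reduce the estimate to a weighted discrete convolution inequality on $\ell^q(\Z^d)$. Since different admissible choices of $\sigma_0$ produce equivalent norms on $M_{p,q}^s(\R^d)$, I may and will work with a partition of unity satisfying $\sum_{k\in\Z^d}\sigma_k\equiv1$ (such a $\sigma_0$, supported in $B(0,\sqrt d)$ and bounded below on $Q_0$, is obtained by the usual normalization of a bump that equals $1$ on $Q_0$). Then $f=\sum_j\Box_jf$ and $g=\sum_l\Box_lg$, so that $fg=\sum_{j,l\in\Z^d}(\Box_jf)(\Box_lg)$; the rearrangements used below are legitimate under the hypotheses of the theorem (first for Schwartz $f,g$, then by density, using the embedding \eqref{yeye}). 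Because $\widehat{\Box_jf}$ is supported in $B(j,\sqrt d)$, the product $(\Box_jf)(\Box_lg)$ has Fourier support in $B(j+l,2\sqrt d)$, whence $\Box_k[(\Box_jf)(\Box_lg)]=0$ unless $|k-(j+l)|<3\sqrt d$. Consequently
\begin{equation*}
\Box_k(fg)=\sum_{\substack{n\in\Z^d\\ |k-n|<3\sqrt d}}\ \sum_{j\in\Z^d}\Box_k\big[(\Box_jf)(\Box_{n-j}g)\big],
\end{equation*}
and for each fixed $k$ the outer sum has at most $C_d$ nonzero terms.

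Next I would invoke two elementary facts: $\Box_k$ is bounded on $L^p(\R^d)$ uniformly in $k$ (it is convolution with $\FT^{-1}\sigma_k$, and $\|\FT^{-1}\sigma_k\|_1=\|\FT^{-1}\sigma_0\|_1<\infty$), and Hölder's inequality in $L^p$ with $\frac1p=\frac1{p_1}+\frac1{p_2}$. These give
\begin{equation*}
\|\Box_k(fg)\|_p\ \lesssim\ \sum_{|k-n|<3\sqrt d}\ \sum_{j\in\Z^d}\|\Box_jf\|_{p_1}\,\|\Box_{n-j}g\|_{p_2}.
\end{equation*}
To bring in the weights I would use $\langle k\rangle^s\lesssim_{s,d}\langle n\rangle^s$ for $|k-n|<3\sqrt d$ and $s\ge0$, and then split the inner sum according to whether $|j|\le|n-j|$ or $|j|>|n-j|$: in the former case $\langle n\rangle\le2\langle n-j\rangle$, in the latter $\langle n\rangle\le2\langle j\rangle$. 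Setting $a_j=\|\Box_jf\|_{p_1}$, $b_l=\langle l\rangle^s\|\Box_lg\|_{p_2}$ and symmetrically $\tilde a_j=\langle j\rangle^s\|\Box_jf\|_{p_1}$, $\tilde b_l=\|\Box_lg\|_{p_2}$, this yields
\begin{equation*}
\langle k\rangle^s\|\Box_k(fg)\|_p\ \lesssim\ \sum_{|k-n|<3\sqrt d}\big[(a*b)(n)+(\tilde a*\tilde b)(n)\big].
\end{equation*}

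Finally I would take $\ell^q(\Z^d)$-norms in $k$. The sequence $m\mapsto\mathbf 1_{\{|m|<3\sqrt d\}}$ lies in $\ell^1(\Z^d)$, so Young's inequality removes the outer sum, and a second application of Young ($\ell^1*\ell^q\hookrightarrow\ell^q$) gives $\|a*b\|_{\ell^q}\le\|a\|_{\ell^1}\|b\|_{\ell^q}$ and $\|\tilde a*\tilde b\|_{\ell^q}\le\|\tilde b\|_{\ell^1}\|\tilde a\|_{\ell^q}$. Here $\|b\|_{\ell^q}=\|g\|_{M_{p_2,q}^s}$, $\|\tilde a\|_{\ell^q}=\|f\|_{M_{p_1,q}^s}$, while by Hölder's inequality in $\Z^d$,
\begin{equation*}
\|a\|_{\ell^1}=\sum_j\langle j\rangle^{-s}\big(\langle j\rangle^{s}\|\Box_jf\|_{p_1}\big)\ \le\ \big\|\langle\cdot\rangle^{-s}\big\|_{\ell^{q'}}\,\|f\|_{M_{p_1,q}^s},
\end{equation*}
and likewise $\|\tilde b\|_{\ell^1}\le\|\langle\cdot\rangle^{-s}\|_{\ell^{q'}}\|g\|_{M_{p_2,q}^s}$. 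Collecting constants gives the claim. The hypotheses on $(s,q)$ enter only at this last step, which is the crux: $\|\langle\cdot\rangle^{-s}\|_{\ell^{q'}(\Z^d)}<\infty$ holds for $q=1$ whenever $s\ge0$ (it equals $1$), and for $q\in(1,\infty]$ precisely when $sq'>d$, i.e. $s>\frac d{q'}$. This is exactly what repairs the failure of $\ell^q*\ell^q\hookrightarrow\ell^q$ for $q>1$, so the weight is indispensable. I expect the only other mildly delicate point to be justifying the product decomposition $fg=\sum_{j,l}(\Box_jf)(\Box_lg)$ and the rearrangements, which is handled by density of Schwartz functions together with the embeddings recalled above.
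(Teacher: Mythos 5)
Your argument is correct. Note, however, that the paper does not actually prove Theorem \ref{th3}: it is imported verbatim from \cite[Theorem 4.3]{CHA} (see also \cite{CHKPmod}), so there is no in-paper proof to compare against. Your derivation --- almost orthogonality of the frequency-uniform decomposition (so that $\Box_k[(\Box_jf)(\Box_lg)]$ vanishes unless $|k-(j+l)|<3\sqrt d$), uniform $L^p$-boundedness of $\Box_k$, pointwise H\"older, a Peetre-type splitting of the weight, and finally Young's inequality on $\ell^q(\Z^d)$ with the summability of $\langle\cdot\rangle^{-s}$ in $\ell^{q'}(\Z^d)$ as the decisive use of the hypotheses on $(s,q)$ --- is precisely the standard route taken in those references, and every step checks out. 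One small point to tighten: for $q=\infty$ (and for $p_i=\infty$) Schwartz functions are not dense in $M^s_{p,q}(\R^d)$, so the density argument you invoke to justify $fg=\sum_{j,l}(\Box_jf)(\Box_lg)$ and the subsequent rearrangements is not available in all the cases you claim to cover. It is also unnecessary: your own computation shows $\sum_j\|\Box_jf\|_{p_1}\le\|\langle\cdot\rangle^{-s}\|_{\ell^{q'}}\|f\|_{M^s_{p_1,q}}<\infty$ under the stated hypotheses, so $\sum_j\Box_jf$ converges absolutely in $L^{p_1}(\R^d)$ (likewise for $g$ in $L^{p_2}$), and the double series for $fg$ converges absolutely in $L^p$; this legitimizes the expansion and all rearrangements directly, for every admissible $p,p_1,p_2,q$.
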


The propagator of the homogeneous Schr\"odigner equation with higher
order anisotropic dispersion is given by
\begin{equation}
\label{gener}
W(t) = \FT^{(-1)}
e^{\iu(\alpha \abs{\xi}^2 + \beta \xi_1^3 + \gamma \xi_1^4)t} \FT,
\end{equation}
where $\xi = (\xi_{1},\xi')\in\R\times\R^{d-1}$. For the rest of the paper, $A \lesssim B$ shall mean that there is a constant $C > 0$ such that $A \leq C B$. The next dispersive estimate is from \cite[Theorem 1.1]{OB}:

\begin{theorem}
\label{OB1}
Consider $p\in[2,\infty]$ and $f\in L^{p'}(\R^d)$. Then
\begin{equation}
\norm{W(t)f}_{L^p(\R^d)} \lesssim
\abs{t}^{-\mu} \norm{f}_{L^{p'}(\R^d)} 
\end{equation}
where
\begin{equation}
\label{eqn:decay}
\mu = 
\mu(d, \gamma, p) \coloneqq
\begin{cases}
\left(d - \iv{2}\right) \left(\iv{2} - \iv{p}\right) &
\, \gamma \neq 0, \\
\left(d - \iv{3}\right) \left(\iv{2} - \iv{p}\right) &
\, \gamma=0,
\end{cases}
\end{equation}
and the implicit constant is independent of the function $f$ and the
time $t$. 
\end{theorem}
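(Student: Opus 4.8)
The statement is the classical dispersive estimate obtained by interpolating an $L^1$--$L^\infty$ bound against the trivial $L^2$ conservation law, and the plan is to carry this out in three steps. First, since the Fourier multiplier $e^{\iu(\alpha\abs{\xi}^2+\beta\xi_1^3+\gamma\xi_1^4)t}$ is unimodular, Plancherel's theorem immediately gives $\norm{W(t)f}_{L^2(\R^d)}=\norm{f}_{L^2(\R^d)}$ for every $t\in\R$. Second, I would prove the endpoint estimate $\norm{W(t)f}_{L^\infty(\R^d)}\lesssim\abs{t}^{-\mu_\infty}\norm{f}_{L^1(\R^d)}$, where $\mu_\infty\coloneqq\mu(d,\gamma,\infty)$, i.e. $\mu_\infty=(d-\iv{2})/2$ if $\gamma\neq0$ and $\mu_\infty=(d-\iv{3})/2$ if $\gamma=0$; by Young's convolution inequality this reduces to showing $\norm{K_t}_{L^\infty(\R^d)}\lesssim\abs{t}^{-\mu_\infty}$ for the kernel $K_t\coloneqq\FT^{(-1)}e^{\iu(\alpha\abs{\xi}^2+\beta\xi_1^3+\gamma\xi_1^4)t}$.

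The heart of the matter is this kernel estimate, and the key observation is that the phase separates additively in $\xi_1$ and $\xi'=(\xi_2,\dots,\xi_d)$, because $\alpha\abs{\xi}^2=\alpha\xi_1^2+\alpha\abs{\xi'}^2$. Consequently $K_t(x_1,x')$ factors as the product of a one-dimensional oscillatory integral in $\xi_1$, with phase $t(\alpha\xi_1^2+\beta\xi_1^3+\gamma\xi_1^4)+x_1\xi_1$, and the classical free Schr\"odinger kernel in the transverse $d-1$ variables. The transverse factor is a Fresnel integral, hence bounded by $\lesssim\abs{\alpha t}^{-(d-1)/2}\lesssim\abs{t}^{-(d-1)/2}$ uniformly in $x'$. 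For the $\xi_1$-factor I would apply van der Corput's lemma (whose constant is uniform in the endpoints, so it applies on all of $\R$ after the usual limiting argument): when $\gamma\neq0$ the fourth derivative of the phase is the nonzero constant $24\gamma t$, bounded below in modulus by $c\abs{t}$ uniformly in $\xi_1$ and $x_1$, so van der Corput with $k=4$ gives a bound $\lesssim(\abs{\gamma}\abs{t})^{-1/4}\lesssim\abs{t}^{-1/4}$; when $\gamma=0$ one has $\beta\neq0$ by hypothesis, the third derivative equals $6\beta t$, and van der Corput with $k=3$ gives $\lesssim\abs{t}^{-1/3}$. Multiplying the two factors yields exactly $\norm{K_t}_{L^\infty}\lesssim\abs{t}^{-(d-1)/2-1/4}=\abs{t}^{-\mu_\infty}$ when $\gamma\neq0$, and $\abs{t}^{-(d-1)/2-1/3}=\abs{t}^{-\mu_\infty}$ when $\gamma=0$, with constants depending only on $d,\alpha,\beta,\gamma$ and not on $t$ or $f$.

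Finally I would interpolate. Riesz--Thorin applied to $W(t)\colon L^2\to L^2$ (norm $1$) and $W(t)\colon L^1\to L^\infty$ (norm $\lesssim\abs{t}^{-\mu_\infty}$), with parameter $\theta=1-\tfrac{2}{p}\in[0,1]$, produces a bounded map $L^{p_\theta}\to L^p$ with $\tfrac1{p_\theta}=\tfrac{1+\theta}{2}=\tfrac1{p'}$ and operator norm $\lesssim\abs{t}^{-\theta\mu_\infty}$. Since $\theta\mu_\infty=(1-\tfrac{2}{p})\mu_\infty$ equals $(d-\iv{2})(\iv{2}-\iv{p})$ for $\gamma\neq0$ and $(d-\iv{3})(\iv{2}-\iv{p})$ for $\gamma=0$, this is precisely the claimed decay $\abs{t}^{-\mu(d,\gamma,p)}$ on $L^{p'}(\R^d)\to L^p(\R^d)$. (One runs the argument first for Schwartz $f$ and extends to general $f\in L^{p'}(\R^d)$ by density.)

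The step I expect to be the main obstacle is the one-dimensional oscillatory integral bound in the second step; what makes it go through --- and what must be checked with care --- is that the derivative on which van der Corput's lower bound is imposed has order at least three, so the only term of the phase depending on the spatial variable, namely $x_1\xi_1$, drops out, and the estimate is automatically uniform in $x_1$. The additive separation of the phase is the structural feature that reduces the whole problem to this one-dimensional question together with the standard $(d-1)$-dimensional Schr\"odinger dispersive estimate.
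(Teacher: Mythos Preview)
The paper does not actually prove this theorem; it is quoted verbatim from \cite[Theorem 1.1]{OB} (Bouchel) and used as a black box. Your argument is correct and is precisely the standard route to such estimates: the additive splitting of the phase into the anisotropic one-dimensional part and the transverse Schr\"odinger part, van der Corput at order $k=4$ (respectively $k=3$) for the $\xi_1$-integral, the explicit Fresnel bound in the remaining $d-1$ variables, and Riesz--Thorin between the resulting $L^1\to L^\infty$ estimate and the unitary $L^2$ bound. This is in fact the strategy used in the cited reference, so your proposal matches the original proof that the present paper is importing. The only point worth tightening in a written version is the ``usual limiting argument'' for the non-absolutely convergent $\xi_1$-integral: one typically inserts a smooth cutoff $\chi(\epsilon\xi_1)$, applies the amplitude version of van der Corput (whose bound involves $\norm{\chi}_\infty+\norm{\chi'}_{L^1}$, uniformly in $\epsilon$), and passes to the limit --- but this is routine and your sketch already flags it.
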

Using this, we claim the following

\begin{theorem}
\label{dispmod}
Consider $s\in\R, p\in[2,\infty]$ and $q\in[1,\infty]$. Then
\begin{equation}
\label{dispmodeq}
\norm{W(t)f}_{M_{p,q}^{s}(\R^{d})} \lesssim
\jb{t}^{-\mu}\norm{f}_{M_{p',q}^{s}(\R^d)}, 
\end{equation}
where $\mu = \mu(d, \gamma, p)$ is as in Equation \eqref{eqn:decay} and the
implicit constant is independent of the function $f$ and the time $t$. 
\end{theorem}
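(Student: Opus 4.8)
The plan is to reduce Equation~\eqref{dispmodeq} to a single frequency-block estimate, uniform in the block, and then to treat large and small times separately. Since $\Box_k$ and $W(t)$ are both Fourier multipliers they commute, so $\Box_k W(t)f = W(t)\Box_k f$, and by the definition~\eqref{def} of the modulation norm the left side of~\eqref{dispmodeq} equals $\norm{\{\jb{k}^{s}\norm{W(t)\Box_k f}_{p}\}_{k\in\Z^d}}_{\ell^q(\Z^d)}$. Hence it suffices to prove the blockwise bound
\[
\norm{W(t)\Box_k f}_{p} \lesssim \jb{t}^{-\mu}\norm{\Box_k f}_{p'}
\]
with an implicit constant independent of $k\in\Z^d$, $t\in\R$ and $f$; applying it termwise and taking the weighted $\ell^q(\Z^d)$-norm then yields the theorem. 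Throughout one uses $\mu\ge0$, which holds because $\mu$ is the product of $d-\tfrac12$ (or $d-\tfrac13$) and $\tfrac12-\tfrac1p$, both nonnegative for $d\ge1$ and $p\ge2$; this lets us pass freely between $\abs t^{-\mu}$ and $\jb t^{-\mu}$.

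For $\abs t\ge1$ the blockwise bound is immediate from Theorem~\ref{OB1} applied to $\Box_k f$ in place of $f$ — note $\Box_k f\in L^{p'}(\R^d)$ when $f\in M^s_{p',q}(\R^d)$ — since it gives $\norm{W(t)\Box_k f}_{p}\lesssim\abs t^{-\mu}\norm{\Box_k f}_{p'}$ and $\abs t\geq\jb t/\sqrt2$ on $\{\abs t\ge1\}$.

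The case $\abs t\le1$ is the main obstacle, since there Theorem~\ref{OB1} provides no decay and the factor $\abs t^{-\mu}$ would even blow up; what rescues the argument is that $\jb t^{-\mu}\ge 2^{-\mu}$ on $\{\abs t\le1\}$, so it is enough to prove a blockwise bound with \emph{no} $t$-dependence. For this I would use that $\Box_k f$ is Fourier-supported in the fixed-size ball $B(0,\sqrt d)+k$: fix $\chi\in C_c^\infty(\R^d)$ with $0\le\chi\le1$ and $\chi\equiv1$ on $B(0,\sqrt d)$, set $\chi_k\coloneqq\chi(\cdot-k)$, and note $\chi_k\sigma_k=\sigma_k$, so that $W(t)\Box_k f=S_{t,k}(\Box_k f)$, where $S_{t,k}$ is the Fourier multiplier with symbol $e^{\iu t(\alpha\abs\xi^2+\beta\xi_1^3+\gamma\xi_1^4)}\chi_k(\xi)$. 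This symbol has sup-norm $\le1$ and $L^1$-norm equal to $\norm{\chi}_1$, uniformly in $k$ and $t$ (since $\abs{e^{\iu t\psi}}=1$); hence by Plancherel $S_{t,k}$ maps $L^2\to L^2$ with norm $\le1$, and by Young's convolution inequality $S_{t,k}$ maps $L^1\to L^\infty$ with a constant uniform in $k$ and $t$. Riesz--Thorin interpolation then gives $S_{t,k}\colon L^{p'}\to L^{p}$ boundedly for every $p\in[2,\infty]$, with a constant depending only on $d$ and $p$, so $\norm{W(t)\Box_k f}_{p}\lesssim\norm{\Box_k f}_{p'}$ on $\{\abs t\le1\}$. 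Combining the two ranges of $t$ completes the blockwise bound and hence the proof. The only points requiring care are the commutation and support bookkeeping and checking that the interpolation constant is genuinely independent of $k$.
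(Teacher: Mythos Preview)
Your proof is correct and follows the same overall strategy as the paper: reduce to a blockwise $L^{p'}\to L^p$ estimate uniform in $k$, invoke Theorem~\ref{OB1} for $\abs{t}\ge 1$, and exploit the compact Fourier support of $\Box_k f$ to obtain a $t$-independent bound for $\abs{t}\le 1$, then take the weighted $\ell^q$-norm. The only difference is in the small-time step, where the paper gets the uniform bound more directly by two applications of Hausdorff--Young together with H\"older on $\supp(\sigma_k)$, instead of your auxiliary cutoff and Riesz--Thorin interpolation; both arguments are valid and yield constants independent of $k$ and $t$.
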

\begin{proof}
The operators $\Box_k$ and $W(t)$ commute and hence we immediately
arrive at
\begin{equation}
\label{eqn:untruncated_decay}
\norm{\Box_k W(t) f}_{L^p(\R^d)} \lesssim 
\abs{t}^{-\mu} \norm{\Box_{k + l} f}_{L^{p'}(\R^d)}
\qquad \forall k \in \Z^d \, \forall t \in \R \setminus \set{0}
\end{equation}
by invoking Theorem \ref{OB1}. Moreover, as $p \in [2, \infty]$, we have
\begin{equation}
\label{eqn:truncated_decay}
\norm{\Box_k W(t) f}_{L^p(\R^d)} \lesssim
\norm{
\sigma_k
e^{\iu(\alpha \abs{\xi}^2 + \beta \xi_1^3 + \gamma \xi_1^4)t}
\hat{f}}_{L^{p'}(\R^d)} \lesssim
\norm{\sigma_k \hat{f}}_{L^p(\R^d)} \lesssim
\norm{\Box_k f}_{L^{p'}(\R^d)}
\end{equation}
for any $k \in \Z^d$ and any $t \in \R$. Above, we used the
Hausdoff-Young inequality for the first and last estimate and the fact
that $\supp(\sigma_k) \subseteq B_{\sqrt{d}}(k)$ for the second
inequality. Taking the minimum of the right-hand sides of
\eqref{eqn:untruncated_decay} and \eqref{eqn:truncated_decay} shows
\begin{equation*}
\norm{\Box_k W(t) f}_{L^p(\R^d)} \lesssim 
\jb{t}^{-\mu} \norm{\Box_k f}_{L^{p'}(\R^d)}
\qquad \forall k \in \Z^d \, \forall t \in \R.
\end{equation*}
Multiplying by the weight $\jb{k}^{s}$ and taking the $l^q(\Z^d)$-norm
on both sides we arrive at the desired estimate.
\end{proof}

\end{section}

\begin{section}{proofs of the main theorems}
In this section we present the proofs of the main theorems.
\begin{proof}[Proof of Theorem \ref{th1}]
For the sake of brevity, let us shorten the notation by setting
\begin{equation}
\label{weirnorm}
\norm{u} \coloneqq
\norm{u}_{L^\infty_{\frac{2}{\gamma_{m, d}}}(\R, M_{m + 2, q}^s(\R^d))}.
\end{equation}
By the Banach fixed-point theorem, it suffices to show that the operator
defined by
\begin{equation}
\label{operweir}
\opT u \coloneqq
W(t) u_0 \pm i\int_0^t W(t - \tau) \left(\pi(u^{m+1})\right) \D{\tau}
\end{equation}
is a contractive self-mapping of the complete metric space
\begin{equation}
\label{metricweir}
M(R) = \set{u \in L^\infty_{\frac{2}{\gamma_{m, d}}}(\R, M_{m + 2, q}^s(\R^d)) \Big| \norm{u} \leq R}
\end{equation}
for some $R \in \R_{+}$. We begin with the self-mapping property and observe that
\begin{equation}
\label{ff}
\norm{\opT u} \leq
\norm{W(t) u_0} + \norm{\int_0^t W(t - \tau) \left(\pi(u^{m + 1})\right) \D{\tau}}.
\end{equation}
Notice, that $\mu(d, \gamma, m + 2) = \frac{2}{\gamma_{m ,d}}$ and hence,
by the dispersive estimate \eqref{dispmodeq}, one obtains
\begin{equation}
\norm{W(t) u_0} =
\sup_{t \in \R} \left[
\jb{t}^{\frac{2}{\gamma_{m,d}}} \norm{W(t) u_0}_{M_{m + 2, q}^s} \right]
\lesssim \norm{u_0}_{M_{\frac{m + 2}{m + 1},q}^s}.
\end{equation}
Introducing the smallness condition
\begin{equation*}
\norm{u_0}_{M_{\frac{m + 2}{m + 1},q}^s(\R^d)} \lesssim \frac{R}{2}
\end{equation*}
leads to $\norm{W(t) u_0} \leq \frac{R}{2}$.

For the integral term we have the upper bound
\begin{equation}
\sup_{t\in\R} \left[ \langle t\rangle^{\frac2{\gamma_{m,d}}}\int_{0}^{t}\langle t-\tau\rangle^{-\frac2{\gamma_{m,d}}}\|\pi(u^{m+1})\|_{M_{\frac{m + 2}{m + 1},q}^{s}} \D{\tau}\right].
\end{equation}
H\"older's inequality for modulation spaces from Theorem \ref{th3} is applicable (due to the assumptions on $s, q$) and yields
\begin{equation}
\|\pi(u^{m+1})\|_{M_{\frac{m + 2}{m + 1},q}^{s}}\lesssim\|u\|_{M_{m + 2,q}^{s}}^{m+1}.
\end{equation}
Furthermore, as $u \in M(R)$, one has
\begin{equation}
\label{eqn:yee455}
\norm{u(\tau, \cdot)}_{M_{m + 2, q}^s} \leq
\jb{\tau}^{-\frac{2}{\gamma_{m ,d}}} \norm{u} \leq
\jb{\tau}^{-\frac{2}{\gamma_{m ,d}}} R \qquad \forall \tau \in \R
\end{equation}
and we obtain the upper bound for the integral term
\begin{equation}
\label{ff1}
R^{m+1} \sup_{t\in\R} \left[\langle t\rangle^{\frac{2}{\gamma_{m,d}}}\int_{0}^{t}\langle t-\tau\rangle^{-\frac2{\gamma_{m,d}}}\langle \tau\rangle^{-\frac{2(m+1)}{\gamma_{m,d}}}\D{\tau} \right].
\end{equation}
To be able to control the individual factors of the integral, we split it into $\int_0^t = \int_0^{\frac{t}{2}} + \int_{\frac{t}{2}}^t$. For the first summand we have
\begin{eqnarray}
\nonumber
\int_{0}^{\frac{t}{2}}\langle t-\tau\rangle^{-\frac2{\gamma_{m,d}}}\langle \tau\rangle^{-\frac{2(m+1)}{\gamma_{m,d}}}\ d\tau&\lesssim&\Big\langle \frac{t}2\Big\rangle^{-\frac2{\gamma_{m,d}}}\frac1{1-\frac2{\gamma_{m,d}}(m+1)}\Big(\Big\langle\frac{t}2\Big\rangle^{1-\frac2{\gamma_{m,d}}(m+1)}-1\Big)\\
\label{ff2}
&\lesssim&
\langle t\rangle^{-\frac2{\gamma_{m,d}}},
\end{eqnarray}
where we used the monotonicity of $\jb{\cdot}$ and the assumption $m>m_{0}$, which implies $\frac{2(m+1)}{\gamma_{m,d}}>1$.
We similarly estimate the second summand by
\begin{equation}
\label{ff3}
\int_{\frac{t}2}^{t}\langle t-\tau\rangle^{-\frac2{\gamma_{m,d}}}\langle \tau\rangle^{-\frac{2(m+1)}{\gamma_{m,d}}}\ d\tau\lesssim\langle t\rangle^{-\frac{2(m+1)}{\gamma_{m,d}}}\int _{\frac{t}2}^{t}\langle t-\tau\rangle^{-\frac2{\gamma_{m,d}}}\lesssim\langle t\rangle^{-\frac2{\gamma_{m,d}}}.
\end{equation}
Putting everything together we arrive at the condition
\begin{equation}
\label{ff4}
\norm{\opT u} \lesssim
\frac{R}{2} + R^{m+1} \overset{!}{\leq} R,
\end{equation}
which is satisfied for suffieciently small $R$.
Similarly we obtain
\begin{equation}
\label{ff5}
\norm{\opT u - \opT v} \lesssim
\left( \norm{u}^m + \norm{v}^m \right) \norm{u - v} \leq 2 R^m \norm{u - v}.
\end{equation}
Hence, under a possibly smaller choice of $R$, the operator $\opT$ is a contraction and the proof is complete.
\end{proof}

\begin{remark}
\label{rem:restriction}
Observe that the restriction $m > m_0$ corresponds to the boundedness of the terms in \eqref{ff2} and \eqref{ff3}.
\end{remark}

\begin{proof}[Proof of Theorem \ref{th2}]
As in the proof of Theorem \ref{th1}, we shorten the notation of the norm by
\begin{equation}
\label{eqee1}
\norm{u} \coloneqq
\norm{u}_{L^\infty_{\frac{2}{\gamma_{2, d}}}(\R, M_{4, q}^s)}
\end{equation}
and introduce the operator
\begin{equation}
\label{eqee2}
\opT u = W(t)u_0 \pm \iu \int_0^t W(t - \tau)(f(u)) \D{\tau},
\end{equation}
which we want to be a contractive self-mapping of the complete metric space $M(R)$ for some $R \in \R_{+}$.
We begin with the self-mapping property. By the definition of the nonlinearity
$f(u)=\lambda(e^{\rho|u|^{2}}-1)u$ we have
\begin{equation}
\label{eqee3}
f(u)=\lambda\sum_{k=1}^{\infty}\frac{\rho^{k}}{k!}|u|^{2k}u.
\end{equation}
Following the proof of Theorem \ref{th1}, we arrive at
\begin{equation}
\label{eqee4}
\norm{\opT u} \lesssim
\norm{u_0}_{M_{\frac43,q}^{s}} +
\sum_{k=1}^{\infty} \sup_{t\in\R} \left[ \langle t\rangle^{\frac2{\gamma_{2,d}}}\int_{0}^{t}\langle t-\tau\rangle^{-\frac{2}{\gamma_{2,d}}}\ \frac{\rho^{k}}{k!}\ \||u|^{2k}u\|_{M_{\frac43,q}^{s}}\D{\tau} \right].
\end{equation}
H\"older's inequality for modulation spaces from Theorem \ref{th3} is applicable (due to the assumptions on $s, q$) and yields the estimate
\begin{equation}
\label{eqee5}
\||u|^{2k}u\|_{M_{\frac43,q}^{s}}\lesssim\|u\|^{3}_{M_{4,q}^{s}}\|u\|_{M_{\infty,q}^{s}}^{2k-2}\lesssim\|u\|_{M_{4,q}^{s}}^{2k+1},
\end{equation}
where in the second inequality we used \eqref{yeye233}, i.e. the embedding
$M_{4,q}^{s}(\R^d)\hookrightarrow M_{\infty,q}^{s}(\R^d)$. Hence, by \eqref{eqn:yee455} for $m = 2$, we obtain
\begin{equation}
\label{eqee6}
\norm{\opT u} \lesssim
\|u_{0}\|_{M_{\frac43,q}^{s}}+\sum_{k=1}^{\infty}\frac{\rho^{k}}{k!}\ R^{2k + 1}\
\sup_{t\in\R} \left[ \langle t\rangle^{\frac2{\gamma_{2,d}}}\int_{0}^{t}\langle t-\tau\rangle^{-\frac2{\gamma_{2,d}}}\langle\tau\rangle^{-(2k+1)\frac2{\gamma_{2,d}}} \D{\tau} \right].
\end{equation}
The supremum above is finite by the same reasoning as in the proof of Theorem \ref{th1} and we therefore arrive at the condition
\begin{equation}
\label{eqee7}
\norm{\opT u} \lesssim
\|u_{0}\|_{M_{\frac43,q}^{s}}+\sum_{k=1}^{\infty}\frac{\rho^{k}}{k!} R^{2k+1} =
\|u_{0}\|_{M_{\frac43,q}^{s}} + \left(R \, e^{\rho R^2} - 1 \right)
\overset{!}{\leq} R.
\end{equation}
Thus, if $\|u_{0}\|_{M_{\frac43,q}^{s}}\lesssim\frac{R}2$ and $R>0$ is sufficiently small, the operator $\mathcal T$ is a self-mapping of the space $M(R)$. The contraction property is proved in a similar way.
\end{proof}

\textbf{Acknowledgments}: The authors gratefully acknowledge financial support by the Deu\-tsche Forschungs\-gemeinschaft (DFG) through CRC 1173.

\end{section}

\end{document}